\def\ps@plain{%
  \def\@oddfoot{\hfil\thepage\hfil}%
  \def\@evenfoot{\hfil\thepage\hfil}%
  \def\@oddhead{}%
  \def\@evenhead{}%
}
\newtheorem{theorem}{Theorem}
\newtheorem{lemma}[theorem]{Lemma}
\newtheorem{proposition}[theorem]{Proposition}
\newtheorem{remark}[theorem]{Remark}
\newcommand{\R}{\mathbb{R}}
\newcommand{\eps}{\varepsilon}
\title{Singularity of the loops within a cable-graph loop-soup conditioned by its occupation time}
\author{Arthur Dremaux}
\address{University of Cambridge}
\email {ad2231@cam.ac.uk}
\date{}
\begin{document}

\begin{abstract}
In this note, we show the following feature of the relation between Brownian loop-soups on cable-graphs and their total occupation time-field  $\Lambda$:  When conditioned on $\Lambda$, the conditional law of individual loops becomes singular with respect to that of unconditioned loops. The idea of the proof is to see that some type of fast points on the curve $\Lambda$ impose an exceptional behaviour of all the loops when they go through these points.
\end{abstract}

\maketitle

\section{Introduction}

Path properties of real-valued Brownian motion have been a constant source of study in this past century. Recall that almost surely, Brownian motion is a random continuous function with quadratic variation equal to $t$ on $[0,t]$, that is differentiable nowhere and does not have points of local increase. Since Khinchin-Kolmogorov's law of the iterated logarithm and the various results by Paul L\'evy \cite{levy1992processus} (that include his result about the modulus of continuity of Brownian motion which will play a role in the present paper), many fascinating features of Brownian paths have been described -- see for instance \cite {mp2010} and the references therein.

Another of these properties that will be relevant in the present paper is the existence of continuous local time profile for one-dimensional Brownian paths: Almost surely, for each finite time $T$, the occupation time measure $\mu_T$ of a Brownian motion $\beta=(\beta_t, t \ge 0)$ up to time $T$ that is defined by $\mu_T (A) = \int_0^T 1_{\beta_s \in A} ds$, has a continuous density $x \mapsto \ell_x^T$ with respect to the Lebesgue measure (see again \cite{mp2010} or \cite{revuzyor}).

It is a rather natural question to investigate the conditional law of $(\beta_t, t \le T)$ when one conditions on its occupation time profile at a positive time $T$. This is the so-called Brownian burglar that was introduced and studied by Warren and Yor in  \cite{SPS_1998__32__328_0} -- the case where $\beta$ is a Brownian excursion has then been studied  by Aldous in \cite{MR1650567}.  More recently, A\"\i d\'ekon, Hu and Shi \cite {AHS} did use another approach based on the joint law of all local times profiles (a little bit in the spirit of the construction of the ``true self-repelling motion'' in \cite {TW}) to construct the process (which will be rather closely related to the setups that we will be studying).
In other words, one conditions the Brownian path $\beta$ by where/how it ``spreads'' its time until $T$. We can note that any almost sure properties of the Brownian paths (like abouts its quadratic variation, the modulus of continuity etc) will necessarily still hold for this conditioned process (indeed, an event that holds almost surely will almost surely hold almost surely for the conditional law), so that the trajectory of a  Brownian burglar looks a priori similar to that of an usual Brownian path.
However, as we will see in the present paper,  if $T$ is the first time at which the local time at the origin of $\beta$ reaches some positive value $l$, then it is possible to argue that for all $\eps >0$, the conditional law of $(\beta_t, t \le \eps)$ given $\mu_T$ is singular with respect to that of an unconditioned Brownian motion up to time $\eps$ (this will be Proposition \ref {Pburglar}).

One natural avenue to derive this type of result is to look at the Brownian burglar as a Brownian motion with a ``drift'', and to check that this drift is almost surely not in the Cameron-Martin space of Brownian motion.
While this strategy can actually be made to work and provides informative insight into the properties of the Brownian burglar (this will actually be the focus of our upcoming paper \cite {D}), it requires  rather subtle considerations. In the present paper, we will instead use a simple rather direct alternative approach, that roughly amounts to showing that the law of the occupation time measure of the burglar at time $\eps$ has to be singular with respect to that of  unconditioned Brownian motion -- which then implies the proposition. This approach is in fact in the spirit of (though not identical to) a number of works from the mid-1980's about exceptional times and points for Brownian trajectories. For instance, Barlow and Perkins \cite{BarlowPerkins1984, BarlowPerkins1986} did study some exceptional points at which Brownian motion behaved exceptionally ``at each visit'' and studied how it impacted its local time profile (see the references therein for further related work).

\medbreak

Our main purpose in the present paper is in fact to study this type of question in the closely related set-up of Brownian loop-soups in transient cable-graphs that we now briefly recall. A cable-graph $G$ can be viewed as the metric space obtained when one first considers a discrete graph, then associates to each edge $e$ joining two sites $x$ and $y$ in $G$ a segment $I_e$ isomorphic to some compact interval of $\R$, where the endpoints of the edges correspond to $x$ and $y$. The cable-graph is then the union of these ``physical'' edges $I_e$. On this graph, it is then possible to define Brownian motion (loosely speaking, when it is in the middle of an edge, it moves like a one-dimensional Brownian motion, and when it is at a vertex, it chooses for each excursion away from $x$ uniformly at random in which edge adjacent to $x$ to start). The trace on the sites of the graph of such a Brownian motion can be viewed as a random walk of $G$. One can then also define a natural measure on Brownian loops on such a cable-graph, and then consider a Poisson point process of Brownian loops  with intensity given by some constant $c$ times this Brownian loop measure (see \cite{Lupu2016LoopClustersInterlacementFreeField} for background). When the graph $G$ is transient, this is a Poisson point process of (unrooted) Brownian loops where each given edge is almost surely visited by infinitely many loops, but only by finitely many ones with diameter greater than any fixed $\eps$. Each of these Brownian loops in the loop-soup has a continuous occupation time density, and when one sums these densities over all Brownian loops in the loop-soup, one obtains the occupation time density of the entire Brownian loop-soup. This is a random function $\Lambda_c$ that can be viewed as the cable-graph generalization of the squared Bessel processes of dimension $c$. In particular, when $c=1$, the law of $\Lambda_c$ is that of the square of a Gaussian Free Field on this cable-graph (recall that the GFF on the cable-graph is a continuous function from the cable-graph into $\R$ that can be viewed as the natural way to extend Brownian bridges on each interval $I_e$ to the whole cable-graph).

It is natural and intriguing to investigate the relation between the occupation time and its decomposition into loops ; particularly in the cases where $c=1$ and $c=2$, where the laws of the loop-soup are invariant under certain simple local rewiring operations (for $c=1$ one has to consider unoriented loops, and for $c=2$, one considers oriented loops) that do preserve its occupation time field  \cite{werner2015spatialmarkovpropertysoups}. Aspects of the conditional law of the loop-soups given $\Lambda_c$ in the case $c=1$ have for instance been discussed in the recent papers \cite {werner2025switchingidentitycablegraphloop,lupu2025intensitydoublingbrownianloopsoups}.

The main purpose of the present note is to provide a simple proof of the following fact for any positive $c$:
\begin {proposition}
 \label {Ploops}
When one conditions on the total occupation time field $\Lambda_c$, the conditional law of each individual loop in the loop-soup  is singular with respect to that of an unconditioned loop (i.e., it is singular with respect to the loop measure).
\end {proposition}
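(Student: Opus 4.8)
The plan is to reduce the statement to a property of the occupation density of a single loop and to exploit the sharp (Lévy-type) spatial modulus of continuity of occupation fields, which along each edge are squared Bessel processes by the cable-graph Ray--Knight description. I would fix an edge, a compact subinterval $J$ of its interior on which $\Lambda_c$ is bounded, and select in a measurable way a macroscopic loop $\gamma$ whose trace covers $J$ --- for instance the one of largest diameter among the (a.s. finitely many) such loops --- so that its occupation density $L^\gamma$ is positive on $J$. Writing $\Lambda_c = L^\gamma + R$, where $R$ is the occupation field of the remaining soup, the restriction property of the Poissonian loop-soup gives that $\gamma$ and $R$ are independent. Since $L^\gamma$ is a measurable functional of the loop, singularity of the conditional law of $\gamma$ given $\Lambda_c$ with respect to the loop measure will follow from singularity of the corresponding laws of $L^\gamma$; it thus suffices to produce, for almost every realisation $\lambda$ of $\Lambda_c$, an event in loop space that has full conditional probability but is null for an independent loop sampled from the loop measure.

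For a continuous function $Z$ write $\kappa_Z(x) = \limsup_{h\to 0^+}\big(Z(x+h)-Z(x)\big)/\sqrt{2h\log(1/h)}$. The two inputs I would use are, first, the a.s. pointwise upper bound $\kappa_Z(x)\le 2\sqrt{Z(x)}$ for all $x$ (the local Lévy modulus of continuity for the squared-Bessel occupation field, applied to $Z\in\{L^\gamma,R,\Lambda_c\}$), and second, the existence and dimension of fast points: $L^\gamma$ a.s. has maximal fast points on $J$, where $\kappa_{L^\gamma}(x)=2\sqrt{L^\gamma(x)}>0$, and these form a set $F$ of Hausdorff dimension $0$, while for each $a>0$ the set $\{\kappa_R\ge a\}\cap J$ has dimension strictly below $1$ (using that $\sup_J\Lambda_c<\infty$, hence $R\le\Lambda_c$ is bounded on $J$).

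The rigidity step is the heart of the argument. In the true soup, $F$ is a $\gamma$-measurable, hence $R$-independent, dimension-$0$ subset of $J$; intersecting it with the independent sets where the normalised two-sided oscillation of $R$ exceeds $1/n$, each of dimension $<1$, and taking a countable union shows that a.s. $R$ has vanishing normalised oscillation on $F$. Consequently the fast increments of $L^\gamma$ along $F$ are neither cancelled nor enhanced by $R$, and $\kappa_{\Lambda_c}=\kappa_{L^\gamma}=2\sqrt{L^\gamma}$ on $F$. Thus the pair $(\gamma,\Lambda_c)$ satisfies the event
\[ \Phi:\quad \exists\, x\in J \text{ that is a maximal fast point of } L^\gamma \text{ with } \kappa_{\Lambda_c}(x)=2\sqrt{L^\gamma(x)} . \]
Conversely, for a loop $\gamma'$ sampled from the loop measure independently of the field $\lambda$ (which is what the loop measure sees), the maximal fast points of $L^{\gamma'}$ again form a dimension-$0$ set $F'$ independent of $\lambda$; on the part where $L^{\gamma'}\ge 1/n$ a matching would force a point of $F'$ to lie in $\{\kappa_\lambda\ge 2/\sqrt n\,\}\cap J$, a set of dimension $<1$ (again using $\sup_J\lambda<\infty$), and independence forces these to be disjoint. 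A countable union over $n$ then gives $\Phi$ probability zero under the independent coupling, so the two laws of $L^\gamma$, and a fortiori of $\gamma$, are singular.

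The step I expect to be the main obstacle is establishing the sharp two-sided modulus information for the occupation fields on the cable graph: the uniform local upper bound $\kappa_Z\le 2\sqrt{Z}$ together with the existence of maximal fast points and the correct Hausdorff dimension of the level sets $\{\kappa_Z\ge a\}$, transported from the classical Lévy and Orey--Taylor theory through the squared-Bessel description and handled simultaneously for the single loop $L^\gamma$, for the infinite collection $R$ (whose local behaviour near a point is produced by infinitely many small loops), and for the total field. A secondary difficulty is the measurable selection of $\gamma$ together with the exact independence of $\gamma$ and $R$, and the control of $\sup_J\lambda$ needed so that the matching constant demanded of the field at the loop's fast points is genuinely near-maximal --- which is precisely what makes the comparison set in the independent coupling dimension-deficient.
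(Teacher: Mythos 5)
Your overall architecture (decompose $\Lambda_c = L^\gamma + R$, compare oscillation exponents, and rule out the ``matching'' under an independent coupling) is in the spirit of the paper, but the step you call the rigidity step -- and on which both directions of your argument rest -- is invalid. You repeatedly use the principle that two \emph{independent} random subsets of an interval, one of Hausdorff dimension $0$ (the maximal fast points $F$ of $L^\gamma$) and one of Hausdorff dimension $<1$ (the sets $\{\kappa_R \ge 1/n\}$, resp.\ $\{\kappa_\lambda \ge 2/\sqrt{n}\}$), must be a.s.\ disjoint. Independence plus a Hausdorff-dimension count gives no such conclusion (two independent copies of a fixed countable dense set already intersect), and for the specific sets at hand the conclusion is actually false in the direction you need: fast-point sets are \emph{limsup random fractals}, for which hitting of an independent (or fixed) target set is governed by \emph{packing} dimension, not Hausdorff dimension (Khoshnevisan--Peres--Xiao), and fast-point sets a.s.\ have full packing dimension $1$ despite having Hausdorff dimension $1-a^2$. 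Consequently, conditionally on $\gamma$, the set $F$ is a fixed set of packing dimension $1$, and the $a$-fast points of the driving motion of the independent field $R$ a.s.\ \emph{do} intersect it; so your claim that ``a.s.\ $R$ has vanishing normalised oscillation on $F$'' fails, and in the converse direction the target set $\{\kappa_\lambda \ge 2/\sqrt n\}$ also has packing dimension $1$, which puts the hitting question for $F'$ exactly at the critical case, where no conclusion (in particular not probability zero for $\Phi$) can be drawn. A secondary, fixable issue: the loop $\gamma$ selected as the largest-diameter loop covering $J$ is \emph{not} independent of the field $R$ of the remaining soup, since $R$ contains the other loops covering $J$ and the selection depends on all of them; some device like the paper's rational diameter-window sampling is needed here.

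The paper's proof is arranged precisely so that no intersection of two random fractals ever has to be controlled, and you could repair your argument by reorienting it the same way. Instead of anchoring at fast points of the \emph{loop} and asking the field to match them, anchor at the $1$-quick points of the \emph{total} field $\Lambda$: once one conditions on $\Lambda$, these points are deterministic. The deterministic triangle-inequality lemma (Lemma \ref{Ladding}) then shows that at any such point $x$ lying in the interior of the range of a loop $\beta_n$ (with $\Lambda_n(x)>0$), the loop's own occupation density $\ell_n$ must have a quick point at $x$, because the remainder field $\Lambda_n$ a.s.\ has no super-maximal oscillation anywhere (this a.s.\ bound is transferred from $\Lambda$ to $\Lambda_n$ by a positive-probability argument, not by independence of dimensions). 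Singularity then follows from a single-point Fubini statement: under the unconditioned loop measure, for a \emph{fixed} point $x$, almost every loop has no quick point at $x$ and local time at $x$ different from $\Lambda(x)$. That is a statement about one deterministic point, not about the intersection of two fractal sets, which is what makes the paper's argument close and yours not.
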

In other words, if one knows $\Lambda_c$, then by looking at  any loop, one can detect that it is not an ``unconditioned loop''. As we will see, it is in fact sufficient to look at any portion of any loop.

As we shall see, this is closely related to the previous burglar question.
The analogue of the Brownian burglar process in the case $c=1$ is the scaling limit of the inverse VRJP process studied by Lupu, Sabot and Tarr\`es in \cite{sabot2015invertingrayknightidentity,Lupu_2019,Lupu_2019b,Lupu_2021}. Here also, one strategy would be to study this process and its dynamics in detail and to show that it can be viewed as a Brownian motion with a drift that does not belong to the Cameron-Martin space, but we will instead use a softer approach based on the local time profiles only, which will work for all $c>0$ -- so that we will not build on this work by Lupu, Sabot and Tarr\`es.

\medbreak

The spirit of this note is to provide a short essentially self-contained proof of Proposition \ref {Ploops}. We will therefore derive only the necessary intermediate results, leaving their generalizations as remarks. We will first recall some features about fast points of Brownian motion (Section~\ref {fasttimes}) and discuss aspects of their generalizations to other diffusions (Section~\ref {quicktimes}). Building on this, we will then prove Proposition \ref {Ploops} and the result about the Brownian burglar in Sections~\ref{proof1} and~\ref{proof2}.

\section{Fast points of Brownian motion}
\label{fasttimes}

Throughout this section, we will use the notation
$$ U(h) := \sqrt { 2 h \log (1/h)}.$$

Let $B$ be a standard Brownian motion [we will use the letter $B$ to denote Brownian motions that will be related in some way to the local time processes in space, so we will denote its variables by $x$, and the letter $\beta$ for Brownian paths parametrized by time $t$ -- even if both $t$ and $x$ can end up being real-valued]. Recall on the one hand the (local) law of the iterated logarithm states that for each given $x$, one has almost surely
$$\limsup_{h \downarrow 0} \frac {B(x+h) - B(x)}{\sqrt {2 h \log \log (1/h) }} = 1,$$
while L\'evy's global modulus of continuity result states that almost surely, for all $T$
$$\limsup_{h \downarrow 0} \sup_{0 \le x ,y\le T, \ |x-y| \le h}
\frac{|B(x) - B(y)|}{U(h)} = 1.$$
More generally, Orey and Taylor  \cite{OreyTaylor1974} did then study the exceptional times, called \textit{fast times} or \textit{fast points} of Brownian motion where the local modulus of continuity is exceptionally large.
If $F$ is a function, then for each $a >0$, one can say that $x$ is an $a$-fast point of $F$ if
$$\limsup_{h \downarrow 0} \frac{|F(x + h) - F(x)|}{U(h)} \ge a.$$
One can then define the set of $a$-fast points of a Brownian path $B$ to be $A_a$. We then define the set $A$ of all fast points of $B$ (when we do not specify any value of $a$) to be  $A = \cup_{a>0} A_a$.
It is simple to see that for each $a$, the set $A_a$ is measurable with respect to the $\sigma$-field generated by Brownian motion as it can be defined as the countable union of countable intersections of countable unions of countable intersections of measurable sets.
L\'evy's modulus of continuity result clearly implies that $A_a$ is almost surely empty for all $a>1$. On the other hand Orey and Taylor \cite{OreyTaylor1974} showed  that almost surely, for each $a \in (0,1)$, the set $A_a$ is not empty (and therefore dense) and has Hausdorff dimension equal to $1- a^2$.

In our proof of Proposition \ref {Ploops}, we will only use the following two facts for Brownian paths:
\begin {itemize}
 \item Almost surely, the set $A_a$ is empty for all $a>1$.
 \item Almost surely, the set $A_1$ is not empty and dense.
\end {itemize}
As we have already mentioned, the first fact follows from L\'evy's modulus of continuity. One way to derive the second one is to combine the fact that $A_a$ is non-empty and dense for all $a<1$ with Baire's Category Theorem (this is explained in the textbook \cite{mp2010} that one can also consult for all these facts).
One can also note that it is possible to instead go back to the actual proof of L\'evy's modulus of continuity and construct ``by hand'' points that belong to $A_1$ simply using Borel-Cantelli.

\medbreak

As a warm-up to what will follow, we can make the following remark: Suppose that $B$ and $B'$ are two independent Brownian motions. Then, $Z:= (B+B') / \sqrt {2}$ is also a Brownian motion.
Clearly, any $1$-fast point $x$ for $Z$ that is not a fast point for $B$ has to be a $\sqrt {2}$-fast point for $B'$, and we know that almost surely no such points exist. It therefore follows that almost surely, any $1$-fast point of $Z$ is necessarily a fast-point of both $B$ and $B'$.
It is in fact not difficult to show that the set of $1$-fast points of $Z$ is exactly the intersection of the set of  $(1/\sqrt {2})$-fast points of $B$ with the set of $(1/\sqrt {2})$-fast points of $B'$ (noting in particular that these two independent sets have Hausdorff dimension $1/2$, while for any $a < 1/ \sqrt 2$, the intersection of $A_a$ with $A_{\sqrt{2}-a}'$ is almost surely empty), but this won't be necessary here.

\section{``Quick points'' of a squared Bessel process}
\label {quicktimes}
Fast points are usually studied for Brownian motion, but it is actually easy to deduce similar results for solutions of SDEs. Indeed, the local behaviour of the solution to an SDE with continuous coefficients like $dX_x = \sigma (X_x) dB_x + b(X_x) dx$ at time $x$ where $\sigma (X_x)$ is not equal to $0$ will be that of $\sigma (X_x)$ times the behaviour of the driving Brownian motion $B$ at that time.  For instance, when $X$ is a squared Bessel process of dimension $n \ge 0$ (here and in the sequel, $n$ is not necessarily an integer), i.e., the solution to
$$\mathrm{d}X_x = 2\sqrt{X_x}\mathrm{d}B_x + n\mathrm{d}x$$
that is absorbed (or possibly reflected, if $n \ge 1$) at its first hitting time $\sigma$ of $0$,
one can define its set of generalized $a$-fast points -- that we will refer to as ``$a$-quick points'' or ``$a$-quick times'' to be the intersection of $\{ x \ge 0 \ : \ X_x \not= 0 \}$ with the set of $a$-fast points of $B$. More generally, when $F$ is a continuous function, one can define its set of $a$-quick points as
    $$Q_a (F) := \Bigl\{ x \ge 0\  : \limsup_{h\downarrow 0}\frac{|F(x+h)- F(x)|}{U(h)} \ge  2a\sqrt{F(x)} > 0\Bigr\}. $$
    The set of quick points of $A$ (when we do not specify any value of $a$) is once again defined to be $Q(F):= \cup_{a>0} Q_a(F)$, i.e., the set of fast points of $B$ at which $X$ is not equal to $0$.
    By the results we have about fast points of the Brownian motion $B$, we immediately deduce that for a squared Bessel process $X$, $Q_a(X)$ is almost surely empty for all $a>1$, and $Q_1(X)$ is almost surely non-empty and dense in the set of points at which $X$ is positive.

This leads naturally to the question about quick points of sums of squared Bessel processes.
A very simple deterministic result that will be useful and sufficient for us is the following:
\begin{lemma}
\label{Ladding}
Suppose that $F_1$ and $F_2$ are two functions such that $F:=F_1 + F_2$ has $1$-quick points and no $a$-quick points for any $a>1$, and that $F_1$ has also no $a$-quick points for any $a >1$. Then, every $x$ for which $F_2(x)>0$, $F_1 (x)>0$ and that is a $1$-quick point of $F$ is necessarily a quick point of $F_2$.
\end{lemma}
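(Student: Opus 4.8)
The plan is to argue by contradiction, exploiting the fact that the ``quickness'' threshold $2a\sqrt{F(x)}$ in the definition of $Q_a$ scales with the value of the function at the base point, together with the strict inequality $F(x) = F_1(x) + F_2(x) > F_1(x)$ that holds precisely because $F_2(x) > 0$. The heuristic is that a $1$-quick point of the sum produces an oscillation calibrated to $\sqrt{F(x)}$; if all of that oscillation came from $F_1$ alone, it would be ``too fast'' for $F_1$, whose natural scale is only $\sqrt{F_1(x)} < \sqrt{F(x)}$, and this is forbidden by hypothesis.

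First I would fix a point $x$ satisfying the three hypotheses ($F_1(x) > 0$, $F_2(x) > 0$, and $x \in Q_1(F)$) and suppose, for contradiction, that $x$ is not a quick point of $F_2$. Since $F_2(x) > 0$, the factor $2a\sqrt{F_2(x)}$ is positive for every $a > 0$, so unpacking $Q(F_2) = \cup_{a>0} Q_a(F_2)$ shows that this assumption is equivalent to the vanishing of the local modulus, namely
$$\limsup_{h \downarrow 0} \frac{|F_2(x+h) - F_2(x)|}{U(h)} = 0,$$
so that $F_2(x+h) - F_2(x) = o(U(h))$ as $h \downarrow 0$.

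Next I would transfer the large local oscillation of $F$ at $x$ onto $F_1$. Writing $F(x+h) - F(x) = (F_1(x+h) - F_1(x)) + (F_2(x+h) - F_2(x))$ and using the reverse triangle inequality with the bound $|F_2(x+h) - F_2(x)| \le \eps\, U(h)$ valid for all small $h$ (for any fixed $\eps > 0$), I would divide by $U(h)$ and take limsups to obtain
$$\limsup_{h \downarrow 0} \frac{|F_1(x+h) - F_1(x)|}{U(h)} \ge \limsup_{h \downarrow 0} \frac{|F(x+h) - F(x)|}{U(h)} - \eps \ge 2\sqrt{F(x)} - \eps,$$
the last inequality being exactly the statement $x \in Q_1(F)$. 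Letting $\eps \downarrow 0$ gives $\limsup_{h} |F_1(x+h) - F_1(x)|/U(h) \ge 2\sqrt{F(x)}$. Since $F_2(x) > 0$ forces $\sqrt{F(x)} > \sqrt{F_1(x)} > 0$, this reads $\limsup_{h} |F_1(x+h) - F_1(x)|/U(h) \ge 2a\sqrt{F_1(x)} > 0$ with $a := \sqrt{F(x)}/\sqrt{F_1(x)} > 1$, i.e.\ $x \in Q_a(F_1)$ for some $a > 1$. This contradicts the hypothesis that $F_1$ has no $a$-quick points for $a > 1$, so $x$ must be a quick point of $F_2$.

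I do not expect a genuine obstacle here, since the content is elementary once the definitions are unwound; the only point requiring care is the bookkeeping around the definition of $Q_a$. One must keep track that the threshold is measured against $\sqrt{\,\cdot\,}$ of the function's value at the base point, and observe that it is the \emph{strict} positivity of $F_2(x)$ (not merely its non-negativity) that produces the strict gain $a > 1$ needed to invoke the hypothesis on $F_1$; this is exactly where the assumptions $F_1(x) > 0$ and $F_2(x) > 0$ are both used.
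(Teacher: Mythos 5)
Your proof is correct and rests on exactly the same ingredients as the paper's: the triangle inequality applied to the limsup of the increments under the decomposition $F = F_1 + F_2$, together with the strict gap $\sqrt{F(x)} > \sqrt{F_1(x)}$ forced by $F_2(x) > 0$. The only difference is organizational: you argue by contradiction (pushing the oscillation onto $F_1$ after assuming $F_2$'s local modulus vanishes), whereas the paper argues directly and obtains the quantitative bound $\limsup_{h\downarrow 0} |F_2(x+h)-F_2(x)|/U(h) \geq 2\bigl(\sqrt{F(x)} - \sqrt{F_1(x)}\bigr)$, but this is the contrapositive of the same argument.
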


\begin{proof}
Suppose that $x \in Q_1 (F)$, i.e., that
$$ \limsup_{h\downarrow 0}\frac{|F(x+h)- F(x)|}{U(h)} = 2\sqrt{F(x)} > 0$$
Since $F_1$ has almost surely no $a$-quick point with $a>1$, it follows that for all $y$ (and in particular at $x$) for which $F_1 (y) >0$,
$$\limsup_{h\downarrow 0}\frac{|F_1(y+h)- F_1(y)|}{U(h)} \leq 2\sqrt{F_1(y)} $$
But since $F=F_1+F_2$, we get that
    $$ \limsup_{h\downarrow 0}\frac{|F(x+h)- F(x)|}{U(h)} \leq \limsup_{h\downarrow 0}\frac{|F_1(x+h)- F_1(x)|}{U(h)} +\limsup_{h\downarrow 0}\frac{|F_2(x+h)- F_2(x)|}{U(h)},
    $$
and therefore
    $$\limsup_{h\downarrow 0}\frac{|F_2(x+h)- F_2(x)|}{U(h)} \geq 2\left(\sqrt{F(x)} - \sqrt{F_1(x)}\right). $$
This last difference is clearly positive when $F_2 (x)$ is positive because  $F(x)-F_1(x)= F_2 (x) >0$, which concludes the proof.
\end{proof}

An example is for instance to consider $F_1$ and $F_2$ to be two independent squared Bessel processes of respective dimensions $0$ and $n \ge 0$. The process $F = F_1 + F_2$ is then also a squared Bessel process of dimension $n$, so that the conditions of the lemma are almost surely fulfilled.

\section {Proof of Proposition \ref {Ploops}}
\label {proof1}

Let us now return to the setup of a loop-soup in a transient cable-graph. We consider a Poisson point process ${\mathcal L}$ of Brownian loops with positive intensity $c$ times the ``natural'' Brownian loop measure (we use here the convention from \cite {MR2045953,werner2021lecturenotesgaussianfree}, so this intensity is twice the intensity that is called $\alpha$ in \cite {LeJan2011,Lupu2016LoopClustersInterlacementFreeField}). Throughout this section, $c$ will be fixed, and we will omit the subscripts that indicate the dependence on $c$ (so we will write $\Lambda$ instead of $\Lambda_c$). We consider a fixed large but compact subset $K$ of the cable-graph (if the cable-graph is compact, we would naturally take $K$ to be the entire cable-graph). Almost surely, for any $\eps >0$, the cardinality $N_\eps (K)$ of the set ${\mathcal L}_{\eps, K}$ of Brownian loops of diameter greater than $\eps$ that do intersect $K$ is finite, so that we can order all the loops that intersect $K$ in decreasing order of diameter as $\beta_1, \beta_2, \ldots$ We denote by $\Lambda$ the total occupation time density of the loop-soup (i.e., $\Lambda(x)$ is the sum of the local time at $x$ over all loops in the loop-soup). So, when $c=1$, $\Lambda$ is distributed as the square of a GFF on the cable-graph. In the general case, the law of $\Lambda$ is the natural generalization to the cable-graph of the square of a Bessel process of dimension $c$. In particular, if one conditions on the value of $\Lambda$ at two neighboring vertices on the graph, the conditional law on the edge between these two vertices will be that of a squared Bessel bridge of dimension $c$. This provides one simple way to see that almost surely, the set of $1$-quick points of $\Lambda$ is dense on the cable-graph and that $\Lambda$ has no $a$-quick points for $a>1$,

When sampling the loop-soup ${\mathcal L}$, one can first sample ${\mathcal L} \setminus {\mathcal L}_{\eps, K}$ and then the (independent) collection ${\mathcal L}_{\eps, K}$.
We can observe that for each $K$, the probability that $N_\eps(K) = 0$ is strictly positive. So, the probability that the total occupation time $\tilde \Lambda$ of the former is equal to $\Lambda$ is positive (independently of $\tilde \Lambda$). This immediately shows that almost surely, for every rational $\eps$, the occupation time $\tilde \Lambda$ has a dense set of $1$-quick points, and no $a$-quick points for $a>1$. It then easily follows that the same holds for the occupation time measures of ${\mathcal L} \setminus \{ \beta_1, \ldots, \beta_n \} $ for all $n \ge 1$ (because $\{ \beta_1, \ldots, \beta_n \}$ will be equal to ${\mathcal L}_{\eps, K}$ for some rational $\eps$).
A little variation of this argument shows that the same holds for the occupation time measure $\Lambda_n$ of ${\mathcal L} \setminus \{ \beta_n \}$. [One just samples first the loop-soup without the loops of diameter in $[a_1, a_2]$ that intersect $K$ and then the collection of such loops -- and almost surely, $\beta_n$ will be the only missing loop for some rational $a_1 < a_2$].

\begin{figure}[H]

    \centering
    \includegraphics[width=0.9\textwidth]{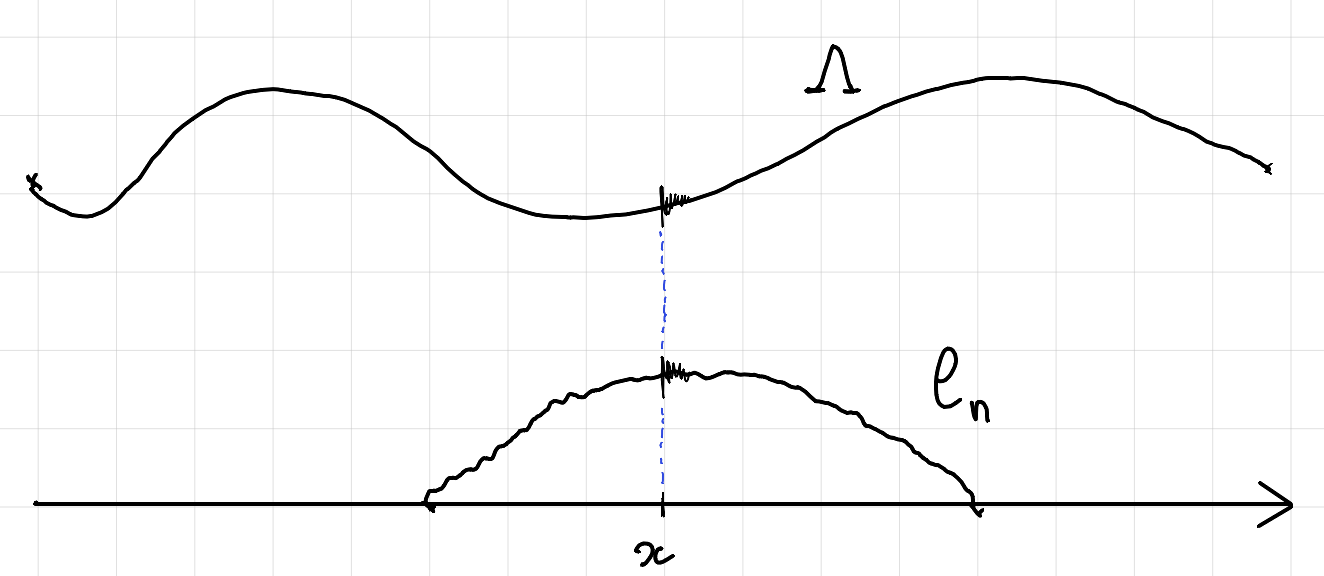}
    \caption{Sketch of the occupation time densities $\Lambda$ and $\ell_n$ of the whole loop-soup and of one loop $\beta_n$ in the loop-soup, on some edge of the cable-graph~: Any $1$-quick point of $\Lambda$ in the support of $\ell_n$ is necessarily a quick point of $\ell_n$. }
    \label{figure1}
\end{figure}

Let $\ell_n$ denote the density of the occupation time measure of $\beta_n$. Then clearly,
$\Lambda = \Lambda_n + \ell_n$. Lemma \ref {Ladding} (or rather its immediate generalization to cable-graphs) implies that almost surely, for every $n$, any point $x$ that is a $1$-quick point of $\Lambda$ with $\Lambda_{n} (x) >0 $ and $\ell_n (x) > 0$ is necessarily a quick point of $\ell_n$. We can note that the set of $1$-quick points of $\Lambda$ is almost surely dense, so it will be almost surely dense on any excursions set away from $0$ by $\Lambda_{n}$ and $\ell_n$. Moreover, it is known that almost surely, $\ell_n$ is positive on the entire range of the loop $\beta_n$ (with the exception of its finitely many boundary points).

Hence, we can conclude that the conditional law of $\beta_n$ given $\Lambda$ has the property that almost surely, 
for every 1-quick point $x$ of $\Lambda$ in the interior of the range of $\beta_n$, either $x$ is a quick point of $\ell_n$ as sketched on Figure~\ref{figure1}, or $\ell_n(x) = \Lambda(x)$ (it is not hard to check that in that case too $x$ has to be a quick point of $\ell_n$, but this is not necessary here). But for any deterministic point $x$ on the cable-graph, almost all loops according to the (unconditioned) Brownian loop-measure on the cable-graph have the property that their occupation measure has no quick point at $x$ and that their local time at $x$ is almost always different from $\Lambda(x)$, so that the law of the loop $\beta_n$ conditionally on $\Lambda$ is indeed singular with respect to that of an unconditioned loop (i.e., it is singular with respect to the unconditioned Brownian loop measure).

\begin {remark}
To prove that one can detect that a Brownian loop is not an unconditioned loop by looking at only a small portion of it, there are a number of options. One way is to use the rewiring property of \cite {werner2015spatialmarkovpropertysoups} at some given point [i.e., one chooses two times at which the loop-soup visits this point, and resamples whether these two times are part of the same loop or not -- mind one can choose this point as a function of $\Lambda$ also, and that the rewiring procedure for $c \not=1$ exists as well -- the rewiring probabilities just depend on the number of loops the rewiring creates] that shows that when one applies a well-chosen rewiring transformation, any portion of any loop will contain what will become a loop in the modified sample, and one can then apply the previous result to this modified loop-soup -- so that this portion will need to create quick points at special points determined by $\Lambda$.
\end {remark}

\begin {remark}
Brownian loop-soups are interesting to study also in continuum $d$-dimensional spaces when $d>1$ -- the definition in \cite {MR2045953} was in fact motivated by the connection with loop-erasures of Brownian motion (for instance, the set of erased loops in Wilson's algorithm correspond to a $c=2$ loop-soup) which can be formulated also in the continuum, or with the connection with SLE processes in two dimensions. However, in such cases, the question that we investigate in the present paper becomes almost trivial. Indeed, the trace of the loop-soup is easily seen to have zero Lebesgue measure in the plane, so that when one conditions the loop-soup on its trace, each Brownian motion is conditioned to stay in this trace and is clearly singular with respect to an unconditioned Brownian loop. So, the ``singularity'' question is essentially a one-dimensional one.
\end {remark}

\section {Singularity of the Brownian burglar}
\label {proof2}

We now conclude this note by briefly explaining how to implement the same ideas to prove the result about the Brownian burglar mentioned in the introduction:

Let us consider a Brownian motion $\beta$ on $\R$. In fact, it will be more convenient to consider $\beta$ to be a reflected Brownian motion -- taking therefore its values in $[0, \infty)$.
Let $\ell_t (x)$ denote its local time at position $x$ and time $t$, and let $\tau(h)$ be the first time at which $\ell_t (0)$ reaches $h$, which is a stopping time for each $h>0$.
The classical Ray-Knight Theorem states that for each $h>0$, the process $(\ell_{\tau(h)} (x))_{x \ge 0}$ is a squared Bessel process of dimension $0$ started from $h$. Recall that squared Bessel processes are absorbed at $0$.

The strong Markov property of Brownian motion at times $\tau(h)$ immediately implies that for any $0<h_1 < \ldots < h_m$, the processes $\ell_{\tau (h_1)}, \ell_{\tau (h_2)} - \ell_{\tau (h_1)},
\ldots , \ell_{\tau (h_m)} - \ell_{\tau (h_{m-1})}$ are independent squared Bessel processes of dimension $0$ started from $h_1$, $h_2- h_1$, \ldots, $h_{m}- h_{m-1}$ respectively.
[This is of course consistent with the classical feature of squared Bessel processes: The sum of two independent squared Bessel processes of dimensions $d$ and $d'$ is a squared Bessel of dimension $d$ + $d'$ -- we are here in the case where $d=d'=d+d' = 0$].

We can actually consider the case where $h_j = j2^{-n}$ and let $n \to \infty$ to see that if we now define for each $h$,
$$ L_h (\cdot) := \inf_{g>h} \ell_{\tau(g)} (\cdot) - \ell_{\tau(h)} (\cdot), $$
then the process $(L_h)_{h \ge 0}$ is a Poisson point process of excursions of squared Bessel processes of dimension 0 away from $0$.

This corresponds exactly to the decomposition of $\beta$ into its excursions away from the origin. Indeed, if $e_h$ denotes the Poisson point process of excursions away from the origin by $\beta$ (i.e., $e_h$ is the excursion when the local time at the origin is $h$), then $L_h$ will be the local time profile of this excursion.

Then:

\begin {proposition}
\label {Pburglar}
For all $\eps >0$ and $l >0$, the conditional law of $(\beta_t, t \le \eps)$ given the occupation time measure $(\ell_{\tau(l)} (x))_{x \ge 0}$ at time $\tau(l)$ is singular with respect to that of an unconditioned Brownian motion.
\end {proposition}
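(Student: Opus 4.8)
The plan is to transport the argument of Section~\ref{proof1} to this excursion-theoretic setting, the role of the total occupation field now being played by $\Lambda := (\ell_{\tau(l)}(x))_{x\ge 0}$. By the Ray--Knight theorem recalled above, $\Lambda$ is a squared Bessel process of dimension $0$ started from $l$, so by Section~\ref{quicktimes} it almost surely has a dense set of $1$-quick points $D := Q_1(\Lambda)$ and no $a$-quick point for any $a>1$. The object to inspect inside $(\beta_t,t\le\eps)$ will be a single excursion. First I would fix, measurably, one macroscopic excursion $e_{h_0}$ that is completed before time $\min(\eps,\tau(l))$ -- for instance the one of largest maximum among those -- so that its local time profile $L_{h_0}$ is a function of $(\beta_t,t\le\eps)$ alone, while its label satisfies $h_0<l$, so that $L_{h_0}$ is one of the excursion profiles entering the decomposition $\Lambda = L_{h_0} + (\Lambda - L_{h_0})$ described before the statement.

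On the conditional side, I would first check that $F_1 := \Lambda - L_{h_0} = \sum_{h\ne h_0} L_h$ has almost surely no $a$-quick point for $a>1$. This is the analogue of the statement for $\Lambda_n$ in Section~\ref{proof1}, obtained by the same resampling trick: one singles out $e_{h_0}$ by requiring its maximum and its local-time label to lie in a small rational box $R$; since excursions of height bounded below arrive at finite rate, there is positive probability that $R$ contains no excursion at all, and on that event the profile of all excursions outside $R$ equals $\Lambda$. As this profile is independent of the content of $R$, it inherits the almost sure regularity of $\Lambda$; and almost surely $e_{h_0}$ is the unique excursion in $R$ for some rational box, whence $F_1$ has the same property. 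Lemma~\ref{Ladding} then applies with $F=\Lambda$ and $F_2=L_{h_0}$, and shows that every $x\in D$ lying in the interior of the range of $e_{h_0}$ and with $F_1(x)>0$ is a quick point of $L_{h_0}$. Since $D$ is dense while $L_{h_0}$ and $F_1$ are simultaneously positive on a nonempty open subinterval (near the left endpoint, where infinitely many other excursions contribute), such points exist; hence, conditionally on $\Lambda$, almost surely $L_{h_0}$ has a quick point at some point of $D$.

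On the unconditioned side, I would argue that this same event is null for reflected Brownian motion. For almost every fixed realization $\Lambda$ the set $D=Q_1(\Lambda)$ coincides with $A_1(B_\Lambda)\cap\{\Lambda>0\}$, where $B_\Lambda$ is the Brownian motion driving $\Lambda$; in particular $D$ is a fixed set of Hausdorff dimension $0$. Under the unconditioned law the chosen excursion profile $L_{h_0}$ is a squared Bessel excursion of dimension $0$, driven by a Brownian motion $\tilde B$ independent of $B_\Lambda$, so its quick points satisfy $Q(L_{h_0})\subseteq A(\tilde B)=\bigcup_{k\ge 1}A_{1/k}(\tilde B)$. The key input is then the fast-point intersection fact in the spirit of the warm-up of Section~\ref{fasttimes}: for independent Brownian motions and any $a>0$, the sets $A_1(B_\Lambda)$ and $A_a(\tilde B)$, of respective dimensions $0$ and $1-a^2$ (summing to less than $1$), meet only in the empty set almost surely. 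A countable union over $a=1/k$ followed by Fubini gives, for almost every $\Lambda$, that almost surely $Q(L_{h_0})\cap D=\varnothing$. Thus the event ``$L_{h_0}$ has a quick point at a point of $D$'' has conditional probability $1$ and unconditioned probability $0$, which is exactly the claimed singularity.

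The main obstacle I anticipate is this last intersection statement: unlike the deterministic Lemma~\ref{Ladding}, it needs genuine fractal information about fast points, namely that an independent $a$-fast set and the (dimension $0$) $1$-fast set cannot meet once their dimensions sum to less than $1$. This is of the same nature as the remark ending Section~\ref{fasttimes} and can be proved by a first-moment estimate over dyadic coverings, but care is required to pass from the joint almost sure statement to the ``for almost every $\Lambda$'' statement via Fubini, and to ensure the measurable selection of $e_{h_0}$ keeps $L_{h_0}$ a function of $(\beta_t,t\le\eps)$ while its driving noise $\tilde B$ stays independent of $B_\Lambda$. A secondary, bookkeeping-type point is the mismatch between conditioning at the local time $\tau(l)$ and describing the law at the deterministic time $\eps$, which is precisely why I route the argument through a completed excursion finishing before $\min(\eps,\tau(l))$ rather than through the full profile of $(\beta_t,t\le\eps)$.
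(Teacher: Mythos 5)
Your conditional-side argument (selecting a macroscopic completed excursion, the rational-box resampling showing $F_1$ has no $a$-quick points for $a>1$, and the application of Lemma~\ref{Ladding} near the left endpoint where $F_1$ and $L_{h_0}$ are both positive) is essentially sound, though more elaborate than needed: the paper simply takes $X_1=\ell_{\tau(h)}$ and $X_2=\ell_{\tau(l)}-\ell_{\tau(h)}$, which are \emph{exactly} independent squared Bessel processes of dimension $0$ by the strong Markov property and Ray--Knight, so no resampling is required, and the condition $\tau(h)\to 0$ handles the $\eps$ versus $\tau(l)$ bookkeeping automatically. The fatal problem is on the unconditioned side, in the intersection statement that you yourself flag as the main obstacle: it is not merely delicate, it is false. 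The set $A_1(B_\Lambda)$ is a countable intersection of open sets (write the limsup condition as $\bigcap_{n}\bigcap_{m}\bigcup_{h<1/m}\{x:|B_\Lambda(x+h)-B_\Lambda(x)|>(1-1/n)U(h)\}$), each of which contains the dense set $A_1(B_\Lambda)$; hence $A_1(B_\Lambda)$ is a dense $G_\delta$, and any dense $G_\delta$ subset of an interval has \emph{packing} dimension $1$ (for any countable cover, Baire's theorem forces one piece to be dense in a subinterval, so its upper box dimension is $1$). For limsup-type random fractals such as fast-point sets, hitting a fixed set is governed by packing dimension, not Hausdorff dimension: by the theorem of Khoshnevisan--Peres--Xiao on limsup random fractals, if $E$ is a fixed analytic set with $\dim_P E>a^2$, then $A_a(\tilde B)$ intersects $E$ almost surely. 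Taking $E=D$ (packing dimension $1$ in every subinterval of $\{\Lambda>0\}$, as just explained) shows that for almost every fixed $\Lambda$ and every $a<1$, the $a$-fast points of the motion driving the unconditioned excursion profile \emph{do} meet $D$ almost surely. Consequently your distinguishing event ``$Q(L_{h_0})\cap D\neq\varnothing$'' has probability one under \emph{both} laws, and no singularity follows. The first-moment estimate over dyadic coverings cannot rescue this: the scales at which $\tilde B$ and $B_\Lambda$ have large increments need not coincide, which is exactly why the ``Hausdorff dimensions sum to less than one'' count is misleading here -- and presumably why the paper stresses that the parenthetical intersection remark at the end of Section~\ref{fasttimes} is not needed anywhere.

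The repair is precisely where the paper's proof diverges from yours: test \emph{countably many fixed points} rather than the uncountable set $D$. Given $\Lambda$, choose (measurably in $\Lambda$) a sequence $(x_k)$ of $1$-quick points of $\Lambda$ tending to $0$. On the conditional side, since $X_1=\ell_{\tau(h)}$ is positive in a neighbourhood of $0$, Lemma~\ref{Ladding} (together with the degenerate case $X_2(x_k)=0$, where $X_1=\ell_{\tau(l)}$ to the right of $x_k$) makes all but finitely many $x_k$ quick points of $X_1$, almost surely. On the unconditioned side one then needs only the elementary fact that a \emph{fixed} point is almost surely not a fast point of a Brownian motion -- by the law of the iterated logarithm the limsup at a fixed point normalized by $U(h)$ is zero -- followed by a countable union over $k$. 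This pointwise fact is available only for countably many deterministic (given $\Lambda$) test points; the moment one replaces them by an uncountable dense $G_\delta$ test set, the hitting problem flips sign, as described above. Your instinct that this step was the crux was correct, but the statement you planned to prove there cannot be true, so the argument as proposed does not establish Proposition~\ref{Pburglar}.
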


\begin {proof}
For $0 < h <l$, let us define $X_1 (x):= \ell_{\tau (h)} (x)$ and $X_2 (x) := \ell_{\tau(l)} (x) - \ell_{\tau(h)}(x)$.  These are two independent squared Bessel processes of dimension $0$ that are respectively started from $h$ and $l-h$. It follows that almost surely, every $1$-quick point $x$ of $\ell_{\tau(l)}$ at which $X_1$ is positive is necessarily a quick point for $X_1$ (if $X_2$ is positive it is Lemma~\ref {Ladding}, otherwise $X_1(y) = \ell_{\tau(l)}(y)$  for $y\geq x$). If $(x_k)$ is a sequence of $1$-fast points that goes to $0$, then it follows that for the conditional law, almost surely, for all large enough $k$, $x_k$ is a fast point for $X_1$. But for the unconditional law of a squared Bessel process of dimension $0$, almost surely, none of the $x_k$'s will be a quick point, which shows that for all $x_0$, the conditional law of $(\ell_{\tau (h)} (x), x \le x_0)$ is indeed singular with respect to the unconditioned one. But we also know that $\tau(h) \to 0$ almost surely as $h \to 0$ -- so that the proposition follows easily.
\end {proof}

\begin {remark}
It is possible to adapt the ideas of the proof (or alternatively to use this proposition) to show that a similar result holds true when one replaces $\tau(l)$ by any other stopping time.
\end {remark}

\subsection*{Acknowledgements}
I thank Wendelin Werner for help and advice during the preparation of this note. I also thank Elie A\"\i d\'ekon for his comments on the first version of this paper. This research has been funded by a grant from the Royal Society.

\bibliographystyle{plain}

\end{document}